\DeclareFontFamily{OT1}{eusb}{} \DeclareFontShape{OT1}{eusb}{m}{n} {<5> <6> <7> <8> <9> <10> <11> <12> <14.4> eusb10}{}
\DeclareMathAlphabet{\eusb}{OT1}{eusb}{m}{n}
\DeclareFontFamily{OT1}{eusm}{} \DeclareFontShape{OT1}{eusm}{m}{n} {<5> <6> <7> <8> <9> <10> <11> <12> <14.4> eusm10}{}
\DeclareMathAlphabet{\eusm}{OT1}{eusm}{m}{n}
\DeclareFontFamily{OT1}{eufm}{} \DeclareFontShape{OT1}{eufm}{m}{n} {<5> <6> <7> <8> <9> <10> <11> <12> <14.4> eufm10}{}
\DeclareMathAlphabet{\mathfrak}{OT1}{eufm}{m}{n}
\DeclareFontFamily{OT1}{fraktura}{}
\DeclareFontShape{OT1}{fraktura}{m}{n} {<5> <6> <7> <8> <9> <10> <11> <12> <13> <14.4> [1.1] eufm10}{}
\DeclareMathAlphabet{\fraktura}{OT1}{fraktura}{m}{n}
\DeclareFontFamily{OT1}{cmfi}{} \DeclareFontShape{OT1}{cmfi}{m}{n} {<5> <6> <7> <8> <9> <10> <11> <12> <13> <14.4> [0.9] cmfi10}{}
\DeclareMathAlphabet{\cmfi}{OT1}{cmfi}{b}{n}
\DeclareFontFamily{OT1}{cmss}{} \DeclareFontShape{OT1}{cmss}{m}{n} {<5> <6> <7> <8> <9> <10> <11> <12> <13> <14.4> cmss10}{}
\DeclareMathAlphabet{\cmss}{OT1}{cmss}{m}{n}
\newtheoremstyle{thm}{1.8ex}{1.8ex}{\itshape\rmfamily}{} {\bfseries\rmfamily}{}{2ex}{}
\newtheoremstyle{def}{1.8ex}{1.8ex}{\slshape\rmfamily}{} {\bfseries\rmfamily}{}{2ex}{}
\newtheoremstyle{rem}{1.8ex}{1.8ex}{\rmfamily}{} {\bfseries\rmfamily}{}{2ex}{}
\newenvironment{proofsect}[1] {\vspace{0.2cm}\noindent{\rmfamily\itshape#1.}}{\qed\vspace{0.15cm}}
\newcommand\cour[1]{{\fontfamily{pcr}\selectfont #1}}
\theoremstyle{thm}
\newtheorem{theorem}{Theorem}[section]
\newtheorem{lemma}[theorem]{Lemma}
\newtheorem{proposition}[theorem]{Proposition}
\newtheorem*{Main Theorem}{Main Theorem.}
\newtheorem{corollary}[theorem]{Corollary}
\newtheorem*{special theorem}{Lindeberg-Feller Theorem for Martingales}
\theoremstyle{def}
\theoremstyle{rem}
\newtheorem{remark}[theorem]{Remark}
\numberwithin{equation}{section}
\renewcommand{\section}{\secdef\sct\sect}
\newcommand{\sct}[2][default]{%
\refstepcounter{section}
\addcontentsline{toc}{section}{{\tocsection {}{\thesection}{\!\!\!\!#1\dotfill}}{}}
\vspace{0.7cm}
\centerline{\scshape\thesection.\ #1} \nopagebreak \vspace{0.2cm}}
\newcommand{\sect}[1]{%
\vspace{0.4cm} \centerline{\large\scshape\rmfamily #1}
\vspace{0.2cm}}
\renewcommand{\subsection}{\secdef\subsct\sbsect}
\newcommand{\subsct}[2][default]{\refstepcounter{subsection}
\addcontentsline{toc}{subsection}
{{\tocsection{\!\!}{\hspace{1.2em}\thesubsection}{\!\!\!\!#1\dotfill}}{}}
\nopagebreak\vspace{0.45\baselineskip} {\flushleft\bf
\thesubsection~\bf #1.~}
\\*[3mm]\noindent
\nopagebreak}
\newcommand{\sbsect}[1]{\vspace{0.1cm}\noindent
\textbf{#1.~}\vspace{0.1cm}}
\renewcommand{\subsubsection}{%
\secdef \subsubsect\sbsbsect}
\newcommand{\subsubsect}[2][default]{%
\refstepcounter{subsubsection} 
\addcontentsline{toc}{subsubsection}{{\tocsection{\!\!}
{\hspace{3.05em}\thesubsubsection}{\!\!\!\!#1\dotfill}}{}}
\nopagebreak
\vspace{0.15\baselineskip} \nopagebreak {\flushleft\rmfamily
\itshape\thesubsubsection
\ \rmfamily #1\/.}\ }
\newcommand{\sbsbsect}[1]{\vspace{0.1cm}\noindent
\rmfamily \itshape
\arabic{section}.\arabic{subsection}.\arabic{subsubsection} \
\sffamily #1\/.\ }
\renewcommand{\caption}[1]{%
\vglue0.5cm
\refstepcounter{figure}
\begin{minipage}{0.9\textwidth}\small {\sc Figure~\thefigure. }#1\end{minipage}}
\newcommand{\N}        {\mathbb N}
\newcommand{\R}        {\mathbb R}
\newcommand{\B}        {\mathbb B}
\newcommand{\Z}        {\mathbb Z}
\newcommand{\Q}        {\mathbb Q}
\newcommand{\E}        {\mathcal E}
\newcommand{\CC}          {\mathcal{C}} 
\newcommand{\HH}           {\mathcal{H}}
\newcommand{\BB}          {\mathcal{B}}
\newcommand{\GG}          {\mathcal{G}} 
\newcommand{\LL}         {\mathcal{L}}
\newcommand{\twoeqref}[2]{(\ref{#1}--\ref{#2})}
\title[Random Walk among random conductances]{Standard Spectral Dimension for the Polynomial Lower Tail Random Conductances model}
\author[Omar Boukhadra
]{Omar BOUKHADRA}
\begin{document} 
\thanks{\hglue-4.5mm\fontsize{9.6}{9.6}\selectfont\copyright\,2009 by Omar Boukhadra. Provided for non-commercial research and education use. Not for reproduction, distribution or commercial use.\vspace{2mm}\\
\cour{e-mail~: omar.boukhadra@cmi.univ-mrs.fr}}


\maketitle
\vspace{-5mm}
\centerline{\textit{CMI, Universit\'e de Provence};}
\centerline{\textit{D\'epartement de Math\'ematiques, UMC, Constantine, Algeria}}
\vspace{-2mm}
\begin{abstract}
We study models of continuous-time, symmetric, $\Z^{d}$-valued random walks in
random environment, driven
by a field of i.i.d. random nearest-neighbor conductances $\omega_{xy}\in[0,1]$ with a power law with an exponent $\gamma$ near 0. We are interested in estimating the quenched decay of the return probability $P_\omega^{t}(0,0)$, as $t$ tends to $+\infty$.  We show that for $\gamma> \frac{d}{2}$, the standard bound turns out to be of the correct logarithmic order. As an expected consequence, the same result holds for the discrete-time case.
\newline\\
\noindent
{\textbf{keywords}}~:
Markov chains, Random walk, Random environments,
Random conductances, Percolation.
\newline
\noindent
{\textbf{AMS 2000 Subject Classification}}~:
60G50; 60J10; 60K37.
\end{abstract}

\section{\textbf{Introduction}}

We study models of continuous-time random walks in reversible random environment on $\Z^d$, $d\geq2$. Our aim is to derive estimates on the decay of
transition probabilities in the absence of  uniform ellipticity assumption.\\
We derive sharp bounds
on the decay of the quenched return probability when the rates are i.i.d. random variables chosen from a law with
polynomial tail near 0 with exponent $\gamma$. We then prove that the standard bound of the heat-kernel turns out to be of the correct logarithmic order, when $\gamma> d/2$, which follows up recent results of Fontes and Mathieu \cite{Fontes-Mathieu}, Berger, Biskup, Hoffman and Kozma \cite{berger}, and Boukhadra \cite{moi}.

\subsection{Describing the model}
Let us now describe the model more precisely. We consider a family of symmetric, irreducible, nearest-neighbors Markov chains taking their values in $\Z^d$, $d\geq2$, and constructed in the following way. Let $\Omega$ be the set of functions $\omega:\Z^d\times\Z^d \rightarrow \R_{+}$ such that $\omega_{xy}>0$ iff $x \sim y$, and  $\omega_{xy}=\omega_{yx}$ \, ( $x \sim y$ means that $x$ and $y$ are nearest-neighbors). We call elements of $\Omega$ environments.

Define the transition matrix
\begin{equation}
\label{protra}
P_{\omega}(x,y)=\frac{\omega_{xy}}{\pi_{\omega}(x)},
\end{equation}
and the associated Markov generator
\begin{equation}
\label{gg}
(\mathcal{L}^{\omega}f)(x)=\sum_{y\sim x}P_{\omega}(x,y)[f(y)-f(x)].
\end{equation}

$X=\{X_t, t\in \R_+\}$ will be the coordinate process on path space $(\Z^d)^{\R_+}$ and we use the notation $P^{\omega}_{x}$ to denote the unique probability measure on path space
under which $X$  is the Markov process generated by \eqref{gg} and satisfying $X_0=x$, with expectation henceforth denoted by $E^\omega_x$. This process can be described as follows. The moves are those of the discrete time Markov chain with transition matrix given in \eqref{protra} started at $x$, but the jumps occur after independent Poisson~(1) waiting times. Thus, the probability that there have been exactly $i$ jumps at time $t$ is $e^{-t}t^i/i!$ and the probability to be at $y$ after exactly $i$ jumps at time $t$ is $e^{-t}t^iP^i_\omega(x,y)/i!$. 

Since $\omega_{xy}>0$ for all neighboring pairs $(x,y)$, $X_t$ is irreducible under the ``quenched law" $P^\omega_x$ for all $x$. The sum $\pi_\omega(x)=\sum_y\omega_{xy}$ defines an invariant, reversible measure for the corresponding (discrete) continuous-time Markov chain. 


The continuous time semigroup associated with $\LL^\omega$ is defined by
\begin{equation}
\label{sg}
P^\omega_t f(x):=E^\omega_x [f(X_t)].
\end{equation}

Define the heat kernel, that is the kernel of $P^t_\omega$ with respect to $\pi_\omega$, or the transition density of $X_t$, by
\begin{equation}
h^\omega_t(x,y):=\dfrac{P^\omega_t(x,y)}{\pi_\omega(y)}
\end{equation}
Clearly, $h^\omega_t$ is symmetric, that is $h^\omega_t(x,y)=h^\omega_t(y,x)$ and  satisfies the Chapman-Kolmogorov equation as a consequence of the semi-group law $P^\omega_{t+s}=	P^\omega_t P^\omega_s$. We call spectral dimension of $X$ the quantity
\begin{equation}
-2 \lim_{t\rightarrow+\infty}\dfrac{\log h^\omega_t(x,x)}{\log t},
\end{equation}
(if this limit exists).

Such walks under the additional assumptions of uniform ellipticity,
\begin{displaymath}
\exists\alpha>0:\quad \Q(\alpha<\omega_{b}<1/\alpha)=1,
\end{displaymath}
have the standard local-CLT like decay of the heat kernel as proved by Delmotte~\cite{del}:
\begin{equation}
\label{heat-kernel}
 P^{n}_{\omega}(x,y)\leq\frac{c_{1}}{n^{d/2}}\exp\left \{-c_{2}\frac{\vert x-y\vert^{2}}{n}\right\},
\end{equation}
where $c_1,c_2$ are absolute constants.

Once the assumption of uniform ellipticity is relaxed, matters get more complicated. The most-intensely studied example is the simple random walk on the infinite cluster of supercritical bond percolation on~$\Z^d$, $d\ge2$. This corresponds to~$\omega_{xy}\in\{0,1\}$ i.i.d.\ with~$\Q(\omega_b=1)>p_c(d)$ where~$p_c(d)$ is the percolation threshold (cf. \cite{G}). Here an annealed invariance principle has been obtained by De Masi, Ferrari, Goldstein and Wick~\cite{demas1,demas2} in the late 1980s. More recently, Mathieu and Remy~\cite{Mathieu-Remy} proved the on-diagonal (i.e., $x=y$) version of the heat-kernel upper bound \eqref{heat-kernel}---a slightly weaker version of which was also obtained by Heicklen and Hoffman~\cite{Heicklen-Hoffman}---and, soon afterwards, Barlow~\cite{Barlow} proved the full upper and lower bounds on 
$P_\omega^n(x,y)$ of the form \eqref{heat-kernel}. (Both these results hold for $n$ exceeding some random time defined relative to the environment in the vicinity of~$x$ and~$y$). Heat-kernel upper bounds were then used in the proofs of quenched invariance principles by Sidoravicius and Sznitman~\cite{Sidoravicius-Sznitman} for $d\ge4$, and for all $d\ge2$ by Berger and Biskup~\cite{BB} and Mathieu and Piatnitski~\cite{Mathieu-Piatnitski}.

We choose in our case the family $\{ \omega_{b}, b=(x,y),x \sim y, b\in\Z^d\times\Z^d\}$ i.i.d according to a law $\Q$ on $(\R^{\ast}_{+})^{\Z^d}$ such that 
\begin{equation}
\label{1}
\begin{array}{ll}
\omega_{b}\leq 1 & \text{for all } b;\\
\Q(\omega_{b} \leq a)\sim a^{\gamma} & \text{when } a\downarrow 0,
\end{array}
\end{equation}
where $\gamma>0$ is a parameter.

Our work is motivated by the recent study of Fontes and Mathieu~\cite{Fontes-Mathieu} of continuous-time random walks on~$\Z^d$ with conductances given by
\begin{displaymath}
\omega_{xy}=\omega(x)\wedge \omega(y)
\end{displaymath}
for i.i.d.\ random variables~$\omega(x)>0$ satisfying \eqref{1}. For these cases, it was found that the annealed heat-kernel, $\int \text{d}\Q(\omega) P^{\omega}_{0}(X_{t}=0)$, exhibits opposite behaviors, \emph{standard and anomalous}, depending whether  $\gamma\geq d/2$ or $\gamma< d/2$. Explicitly, from (\cite{Fontes-Mathieu}, Theorem 4.3) we have
\begin{equation}
\label{an-log-d}
\int \text{d}\Q(\omega) P^{\omega}_{0}(X_{t}=0)=t^{-(\gamma\wedge\frac{d}{2})+o(1)}, \quad t\rightarrow\infty.
\end{equation}

Further, in a more recent paper~\cite{moi}, we show that the quenched heat-kernel exhibits also opposite behaviors, anomalous and standard, for small and large values of $\gamma$. We first prove for all $d\geq5$ that the return probability shows an anomalous decay that approaches (up to sub-polynomial terms) a random constant times $n^{-2}$ when we push the power $\gamma$ to zero. In contrast, we prove that the heat-kernel decay is as close as we want, in a logarithmic sense, to the standard decay $n^{-d/2}$ for large values of the parameter $\gamma$, i.e.~:  there exists a positive constant $\delta=\delta(\gamma)$ depending only on $d$ and $\gamma$ such that $\Q-a.s.$,  
\begin{equation} 
\label{}
\limsup_{n\rightarrow+\infty}\sup_{x\in\Z^d}\frac{\log P^{n}_{\omega}(0,x)}{\log n}\leq -\frac{d}{2}+\delta(\gamma)\quad \text{and}\quad \delta(\gamma)\xrightarrow[\gamma \to +\infty]{}0,
\end{equation}
These results are a follow up on a paper by Berger, Biskup, Hoffman and
Kozma \cite{berger}, in which the authors proved a universal (non standard) upper
bound for the return probability in a system of random walk among
bounded (from above) random conductances. In the same paper, these
authors supplied examples showing that their bounds are sharp.
Nevertheless, the tails of the distribution near zero in these
examples was very heavy.

\subsection{Main results}
Let $\B^d$ denote the set of unordered nearest-neighbor pairs (i.e., edges) of $\Z^d$ and let $(\omega_b)_{b\in \B^d}$ be i.i.d. random variables with~$(\omega_b)\in\Omega=[0,1]^{\B^d}$. We will refer to $\omega_b$ as the \textit{conductance} of the edge~$b$. The law $\Q$ of the~$\omega$'s will be i.i.d.\ subject to the conditions given in \eqref{1}.

We are interested in estimating the decay of the quenched return probability $P^\omega_{0}(X_t=0)$, as $t$ tends to $+\infty$ for the Markov process associated with the generator defined in \eqref{gg} and we obtain, in the quenched case, a similar result to \eqref{an-log-d}. Although our result is true for all $d\geq 2$, but it is significant when $d\geq 5$.

\smallskip
The main result of this paper is as follows:

\begin{theorem}
\label{th}
For any $\gamma>d/2$, we have
\label{th}
\begin{equation}
\label{log-lim}
\lim_{t\rightarrow+\infty}\frac{\log P^\omega_{0}(X_t=0)}{\log t}=-\frac{d}{2}, \qquad \Q-a.s.
\end{equation}
\end{theorem}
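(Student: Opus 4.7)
The plan is to establish the limit in \eqref{log-lim} by proving matching upper and lower bounds on $\log P^\omega_0(X_t=0)/\log t$.

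For the lower bound $\liminf \geq -d/2$, my approach is standard. Since the $\omega_b$ are i.i.d.\ in $[0,1]$, the quenched invariance principle of Mathieu--Piatnitski and Biskup--Prescott yields diffusive tightness: $P^\omega_0(|X_{t/2}|\leq c\sqrt{t})\geq 1/2$ for $t$ large. Reversibility gives
\begin{equation*}
P^\omega_t(0,0) \;=\; \pi_\omega(0)\sum_x \frac{P^\omega_{t/2}(0,x)^2}{\pi_\omega(x)} \;\geq\; \frac{\pi_\omega(0)}{2d}\sum_{|x|\leq c\sqrt{t}} P^\omega_{t/2}(0,x)^2,
\end{equation*}
and Cauchy--Schwarz on the ball of radius $c\sqrt{t}$ produces $P^\omega_t(0,0)\geq C(\omega)\, t^{-d/2}$.

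For the upper bound $\limsup \leq -d/2$, which is the main content, I would refine the percolation/time-change strategy of \cite{moi}. Fix a threshold $\xi>0$ with $\Q(\omega_b>\xi)>p_c(d)$, let $\CC=\CC(\omega)$ denote the unique infinite cluster of ``strong'' edges (those with $\omega_b>\xi$), and consider the time-changed walk $Y$ on $\CC$ obtained by erasing the excursions of $X$ into the finite components (``traps'') of $\Z^d\setminus\CC$. Since conductances on $\CC$ lie in $(\xi,1]$, Barlow's heat-kernel estimates should apply to $Y$: one has $P^{Y,\omega}_s(0,0)\leq C(\omega)\, s^{-d/2}$ for $s$ past a random threshold. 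If $L_t$ denotes the cumulative time $X$ spends on $\CC$ by time $t$, then on the event $\{0\in\CC\}$ (reached by translation after waiting for the first visit to $\CC$) one has $P^\omega_0(X_t=0)\leq E^\omega_0[P^{Y,\omega}_{L_t}(0,0)]$ up to negligible contributions, and the task reduces to showing $L_t\geq t^{1-\varepsilon}$ eventually, $\Q$-a.s., for every $\varepsilon>0$.

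The principal obstacle is precisely this lower bound on $L_t$. A sojourn in a trap is driven by the reciprocal of its weakest boundary conductance, and by \eqref{1} the variable $\omega_b^{-1}$ has finite $s$-th moment exactly for $s<\gamma$. Combining Markov's inequality applied to the $O(t^{d/2})$ edges within $\ell^\infty$-distance $c\sqrt{t}$ of $0$ with the exponential decay of trap sizes in supercritical percolation, one expects the total time spent in traps by time $t$ to be $O(t^{d/(2\gamma)+o(1)})$, $\Q$-a.s.\ eventually. The hypothesis $\gamma>d/2$ is \emph{exactly} what makes the exponent $d/(2\gamma)$ strictly less than $1$, so that a Borel--Cantelli argument along the geometric sequence $t_n=2^n$ yields $t-L_t\leq t^{1-\eta}$ for some $\eta>0$. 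Plugging this into the Barlow bound on $Y$ gives $P^\omega_0(X_t=0)\leq C(\omega)\, t^{-d/2+O(\varepsilon)}$, and sending $\varepsilon\downarrow 0$ completes the proof.
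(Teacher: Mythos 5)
Your lower bound and the overall architecture of your upper bound (time change onto a strong percolation cluster, standard heat-kernel decay for the time-changed walk, reduction to a lower bound on the occupation time $L_t=\hat A(t)$ of that cluster) coincide with the paper's strategy. The gap is in the one step that carries all of the difficulty: the quantitative control of $L_t$.

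First, your claim that the total time spent in traps is $O(t^{d/(2\gamma)+o(1)})$ is false. The holes have positive density, so by the ergodic theorem for the environment viewed from the particle the walk spends a positive \emph{fraction} of its time in them: $t-L_t\sim ct$ with $c>0$. Your heuristic conflates the worst single sojourn (whose expected length is indeed governed by $\max_b\omega_b^{-1}\approx t^{d/(2\gamma)+o(1)}$ over the relevant box, which is the paper's Lemma on $\inf_{b\in\BB_N}\omega_b$) with the aggregate over the order-$t$ many visits to holes. Second, and more importantly, even the correct target --- $\hat A(t)\geq t^{\epsilon}$ with $\epsilon$ close to $1$ --- cannot be reached by a first-moment bound plus Borel--Cantelli: $L_t$ is a functional of the walk, not of the environment alone, so no ``$\Q$-a.s.\ for all large $t$'' statement is available; what is needed is the quenched tail estimate $P^\omega_0(\hat A(t)\leq t^\epsilon)\lesssim t^{-d/2}$, a polynomially small probability that Markov's inequality at first order cannot produce. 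The paper obtains it by exponential Chebyshev, $P^\omega_0(\hat A(t)\leq t^\epsilon)\leq e^{\lambda t^\epsilon}E^\omega_0[e^{-\lambda\hat A(t)};t<\tau_N]+P^\omega_0(\tau_N\leq t)$, kills the exit term by Carne--Varopoulos with $N^2\approx t(\log N)^{b}$, and bounds the Feynman--Kac term by $CN^{d/2}e^{-t\Lambda^\omega_1}$, where $\Lambda^\omega_1$ is the first Dirichlet eigenvalue of $-\LL^\omega_N+\lambda\mathbf{1}_{\widehat{\CC}}$. A path-counting Poincar\'e-type inequality inside the holes (using the $(\log N)^{5/2}$ bound on hole volumes and the minimal conductance $N^{-d/\gamma-\mu}$) gives $\Lambda^\omega_1\gtrsim N^{-d/\gamma-\mu}$, so that $t\Lambda^\omega_1\approx N^{2-d/\gamma-\mu}\to\infty$ precisely when $\gamma>d/2$. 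You correctly located where the hypothesis $\gamma>d/2$ must enter, but the spectral (or some equivalent large-deviation) mechanism that converts it into the required tail bound is missing from your proposal.
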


We follow a different approach from the one that Fontes and Mathieu \cite{Fontes-Mathieu} adopted to prove the same result under the annealed law. In the quenched case, the arguments are based on time change, percolation estimates and spectral analysis. Indeed, one operates first a time change to bring up the fact that the random walk viewed only on a strong cluster (i.e. constituted of edges of order $1$) has a standard behavior. Then, we show that the transit time of the random walk in a hole is ``negligible" by bounding the trace of a Markov operator that gives us the Feynman-Kac Formula and this by estimating its spectral gap.

An expected consequence of this Theorem is the following corollary, whose proof is given in part \ref{subsec3} and that gives the same result for the discrete-time case. For the random walk associated with the transition probabilities given in \eqref{protra} for an environment $\omega$ with conductances satisfying the assumption \eqref{1}, we have
\begin{corollary}
For any $\gamma>d/2$, we have
\label{cor}
\begin{equation}
\label{log-lim}
\lim_{n\rightarrow+\infty}\frac{\log P^{2n}_{\omega}(0,0)}{\log n}=-\frac{d}{2}, \qquad \Q-a.s.
\end{equation}
\end{corollary}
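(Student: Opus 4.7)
The plan is to deduce the discrete-time corollary from Theorem \ref{th} by a standard Poissonization comparison. Write $q_n := P^{2n}_\omega(0,0)$ and $a_t := P^\omega_0(X_t=0)$. Since the nearest-neighbor walk on $\Z^d$ is bipartite, $P^{2k+1}_\omega(0,0)=0$, and the construction of $X$ via Poisson jump times gives
\begin{equation*}
  a_t \;=\; \sum_{k\geq 0} e^{-t}\frac{t^{2k}}{(2k)!}\,q_k .
\end{equation*}
Because $P_\omega$ is self-adjoint on $L^2(\pi_\omega)$ with spectrum contained in $[-1,1]$, the spectral theorem implies that $k\mapsto q_k$ is non-increasing; this monotonicity is the only structural input needed beyond Theorem \ref{th}.

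For the upper bound I would take $t=2n$ and keep only the terms with $k\leq n$. Using $q_k\geq q_n$ on that range,
\begin{equation*}
  a_{2n} \;\geq\; q_n\sum_{k\leq n}e^{-2n}\frac{(2n)^{2k}}{(2k)!}
  \;=\; q_n\,\Prob\bigl(N_{2n}\leq 2n,\; N_{2n}\text{ even}\bigr),
\end{equation*}
where $N_t$ denotes a Poisson variable of parameter $t$. By the local central limit theorem the right-hand probability converges to $1/4$, so $q_n\leq c_1\, a_{2n}$ for some constant $c_1$. Combined with Theorem \ref{th} this yields $\limsup_n \log q_n/\log n\leq -d/2$.

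For the lower bound the natural idea is to push the Poisson mass past $2n$: fix a large constant $A$ and set $t_n:=2n+A\sqrt{n\log n}$. Splitting at $k=n$ and using $q_k\leq q_n$ for $k\geq n$,
\begin{equation*}
  a_{t_n} \;\leq\; \sum_{k<n}e^{-t_n}\frac{t_n^{2k}}{(2k)!}
  + q_n\sum_{k\geq n}e^{-t_n}\frac{t_n^{2k}}{(2k)!}
  \;\leq\; \Prob(N_{t_n}<2n) + q_n .
\end{equation*}
A Chernoff bound for the Poisson distribution gives $\Prob(N_{t_n}<2n)\leq \exp(-c A^2\log n)$ for some fixed $c>0$. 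Choosing $A$ so large that $cA^2>d/2$, this tail is negligible compared with $a_{t_n}=t_n^{-d/2+o(1)}$ supplied by Theorem \ref{th}. Hence $q_n\geq (1-o(1))\,a_{t_n}$, and since $\log t_n/\log n\to 1$ we obtain $\liminf_n \log q_n/\log n\geq -d/2$, which completes the argument.

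No serious obstacle is anticipated: the whole scheme is a textbook continuous/discrete comparison once Theorem \ref{th} is granted. The only point requiring a little care is the calibration of the shift $A\sqrt{n\log n}$, which must be large enough that the discarded Poisson tail beats the polynomial target rate $n^{-d/2}$, yet small enough to leave $\log t_n/\log n$ equal to $1$ in the limit so that the logarithmic rate is preserved.
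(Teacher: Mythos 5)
Your proposal is correct, and for the upper bound on $P^{2n}_\omega(0,0)$ it is essentially the paper's argument: Poissonize, use the monotonicity of $k\mapsto P^{2k}_\omega(0,0)$ (which the paper also extracts from self-adjointness of $P_\omega$, as in Lemma \ref{D}), and compare with $P^\omega_0(X_t=0)$ at $t\approx 2n$. The paper takes $n=\lfloor t\rfloor$ and writes the lower bound as $P^{2n}_\omega(0,0)\,\mathrm{Prob}(N_t\le 2n)$ with the probability tending to $1$; your version with $t=2n$ and the limit $1/4$ is the same idea, and your explicit restriction to even indices is actually the more careful bookkeeping, since the walk is bipartite and the odd-step return probabilities vanish. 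Where you genuinely diverge from the paper is the lower bound: the paper does not de-Poissonize at all, but instead invokes the discrete-time quenched invariance principle of Biskup--Prescott together with the Spatial Ergodic Theorem, exactly as in Remark \ref{rem}, to get $P^{2n}_\omega(0,0)\ge C(\omega)n^{-d/2}$ directly. Your alternative --- shifting the Poisson parameter to $t_n=2n+A\sqrt{n\log n}$, bounding the discarded lower Poisson tail by a Chernoff estimate $n^{-cA^2}$ with $cA^2>d/2$, and concluding $q_n\ge(1-o(1))a_{t_n}$ --- is a valid and self-contained route that needs only Theorem \ref{th} (whose statement already packages the continuous-time lower bound) plus elementary Poisson estimates, at the cost of the calibration of $A$ that you correctly identify. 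The paper's route is shorter but leans on an external CLT input for the discrete chain; yours trades that for a routine large-deviation computation.
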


\begin{remark}
\label{rem}
 The invariance principle (CLT) (cf. Theorem 1.3 in \cite{QIP}) automatically implies the ``usual'' lower bound on the heat-kernel under weaker conditions on the conductances. Indeed, suppose that $\omega_{xy}\in[0,1]$ and the conductance law is i.i.d. subject to the condition that the probability of $\omega_{xy}>0$ exceeds the threshold for bond percolation on~$\Z^d$, and let $\mathcal{C}_{\infty}$ represents the set of sites that have a path to infinity along bonds with positive conductances. Then, the Markov property and reversibility of~$X$ yield 
	$$
	P^{\omega}_{0}(X_{t}=0)\geq \frac{\pi_\omega(0)}{2d}\sum_{ x\in\mathcal{C}_{\infty}\atop \vert x\vert\leq \sqrt{t}}P^{\omega}_{0}(X_{t/2}=x)^{2}.
	$$
	Cauchy-Schwarz then gives
	$$
	P^{\omega}_{0}(X_{t}=0)\geq 	P^{\omega}_{0}\left(\vert X_{t/2}\vert \leq \sqrt{t}\right)^{2}\frac{\pi_\omega(0)/2d}{ \vert\mathcal{C}_{\infty}\cap[-\sqrt{t},+\sqrt{t}]^{d}\vert}.
	$$

	Now the invariance principle implies that $P^{\omega}_{0}(\vert X_{t/2}\vert \leq \sqrt{t})^{2}$ has a positive limit as~$t\to\infty$ and the Spatial Ergodic Theorem shows that \mbox{$\vert \mathcal{C}_{\infty}\cap [-\sqrt{t},+\sqrt{t}]^{d}\vert$} grows proportionally to~$t^{d/2}$. Hence we get 
	$$
	P ^{\omega}_{0}(X_{t}=0)\geq \frac{C(\omega)}{t^{d/2}},
	$$
	with~$C(\omega)>0$  a.s. on the set~$\{0\in \mathcal{C}_{\infty}\}$ and $t$ large enough. Note that, in $d=2,3$, this complements nicely the ``universal'' upper bounds derived in~\cite {berger}, Theorem 2.1. Thus, for $d\geq2$, we have
	\begin{equation}
	\label{tc}
	\liminf_{t\rightarrow+\infty}\frac{\log P^\omega_{0}(X_t=0)}{\log t}\geq-\frac{d}{2} \qquad \Q-a.s.
	\end{equation}
	and for the cases $d=2,3,4$, we have already the limit \eqref{log-lim} under weaker conditions on the conductances (see \cite{berger}, Theorem~2.2). So, under assumption~\eqref{1}, it remains to study the cases where $d\geq5$ and prove that for $\gamma>d/2$,
\begin{equation}
	\label{tc2}
	\limsup_{t\rightarrow+\infty}\frac{\log P^\omega_{0}(X_t=0)}{\log t}\leq-\frac{d}{2} \qquad \Q-a.s.
	\end{equation}	
\end{remark}

\section{\textbf{A time changed process}}

In this section, we introduce a time changed process $\hat{X}$.

Choose a threshold parameter $\xi>0$ such that $\Q(\omega_b\geq \xi)>p_c(d)$. The i.i.d.\ nature of the measure~$\Q$ ensures that for $\Q$ almost any environment $\omega$, the percolation graph $(\Z^d,\{e\in \B^d; \omega_b\geq \xi\})$ has a unique infinite cluster that we denote with $\CC^\xi(\omega)$.

We will refer to the connected components of
the complement of $\CC^\xi(\omega)$ in $\Z^d$ as {\it holes}. By definition, holes are connected sub-graphs of the grid. The sites which belong to $\CC^\xi(\omega)$ are all the endvertices of its edges. The other sites belong to the holes. Let $\HH^\xi(\omega)$ be the collection of all holes. Note that holes may contain edges such that $\omega_b\geq\xi$.

First, we will give some important characterization of the volume of the holes, see Lemma 5.2 in \cite{QIP}. $\CC$ denotes the {\it infinite cluster}.
\begin{lemma}
\label{v-h}
There exists $\bar{p}<1$ such that for $p>\bar{p}$, for almost any realization of bond
percolation of parameter~$p$ and for large enough $n$, any connected component of the complement
of the infinite cluster $\CC$ that intersects the box $[-n,n]^d$ has volume smaller than $(\log n)^{5/2}$.
\end{lemma}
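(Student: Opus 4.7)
The plan is to carry out a Peierls-type contour argument under the bond percolation measure of parameter $p=\Q(\omega_b\geq\xi)$, which we may take arbitrarily close to $1$ by choosing $\xi$ small; set $q=1-p$. The key observation is that if $H$ is a connected component of the complement of the infinite cluster $\CC$, then every edge in the edge-boundary $\partial_e H$ must be closed, for otherwise a vertex of $H$ would be adjacent via an open edge to $\CC$ and hence itself lie in $\CC$, contradicting $H\cap\CC=\emptyset$.

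For a fixed $x\in\Z^d$, I would then bound the probability that $x$ belongs to a hole of volume at least $V$. Any such hole provides a finite connected set $H\ni x$ whose edge-boundary $\partial_e H$ is a minimal cutset separating $x$ from infinity and consists entirely of closed edges. Two classical ingredients are needed: (i) the isoperimetric inequality on $\Z^d$, namely $|\partial_e H|\geq c_d |H|^{(d-1)/d}$, and (ii) a combinatorial estimate bounding the number of minimal edge-cutsets separating $x$ from $\infty$ of cardinality $m$ by $m\,\alpha_d^m$ for some constant $\alpha_d=\alpha_d(d)$. Combining these with the independence of the edges gives
\[
\Q\bigl(x\text{ lies in a hole of volume }\geq V\bigr)\;\leq\;\sum_{m\geq c_d V^{(d-1)/d}} m\,(\alpha_d q)^m\;\leq\; C\,(\alpha_d q)^{c_d V^{(d-1)/d}},
\]
which, once $\bar p$ is close enough to $1$ so that $\alpha_d q<1$, decays stretched-exponentially in $V$.

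Taking $V=(\log n)^{5/2}$ and using $(d-1)/d\geq 1/2$ for $d\geq 2$, the previous bound is at most $\exp\bigl(-c'(\log n)^{5/4}\bigr)$. A union bound over the $(2n+1)^d$ sites $x\in[-n,n]^d$ yields
\[
\Q\bigl(\exists\,x\in[-n,n]^d\text{ lying in a hole of volume }\geq(\log n)^{5/2}\bigr)\;\leq\;(2n+1)^d\,e^{-c'(\log n)^{5/4}},
\]
which is summable in $n$ since $(\log n)^{5/4}/\log n\to\infty$. The Borel--Cantelli lemma then concludes the proof.

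The main obstacle is ingredient (ii) in higher dimensions: while in $d=2$ the cutsets are simply self-avoiding circuits in the dual lattice and are straightforward to enumerate, for $d\geq 3$ one needs the fact (going back to Aizenman--Delyon--Souillard, and sharpened by Babson--Benjamini and by Deuschel--Pisztora) that minimal edge-cutsets in $\Z^d$ can be identified with connected polyhedral surfaces in the dual complex, the number of which containing a prescribed plaquette grows only exponentially in their area. Once this combinatorial ingredient is in hand, the rest is a routine Peierls--Borel--Cantelli combination, and one should also verify that the specific exponent $5/2$ in the statement is not tight; any exponent strictly greater than $1$ would work for the same reason.
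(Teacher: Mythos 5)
Your route---all boundary edges of a hole are closed, discrete isoperimetry, exponential counting of minimal edge-cutsets, a Peierls sum, and Borel--Cantelli---is the standard proof of this statement and is essentially the argument behind the reference the paper itself gives (Lemma~5.2 of~[M08], which rests on the same Deuschel--Pisztora/Babson--Benjamini circle of ideas); the paper offers no proof of its own beyond that citation, so you are supplying the expected one. One step needs repair: your reduction ``any such hole provides a \emph{finite} connected set $H\ni x$ whose edge-boundary is a minimal cutset'' tacitly assumes the hole is finite, whereas the lemma in particular asserts that no infinite hole meets the box, so that case must be excluded rather than presupposed. A clean fix: if the hole containing $x$ has at least $V$ vertices, pick a connected subset $A$ of the hole with $x\in A$ and $|A|=\lceil V\rceil$, and let $K$ be the union of the open clusters of the vertices of $A$. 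Each such cluster is finite because no vertex of $A$ lies in $\CC$, so $K$ is a finite connected set containing $A$, and every edge of its edge boundary is closed (an open boundary edge would enlarge one of these clusters). Your cutset estimate then applies verbatim to $K$, with $|K|\ge V$, and handles the finite and infinite cases simultaneously. (When you pass from the edge boundary of $K$ to a \emph{minimal} cutset $\Pi$ inside it, apply the isoperimetric inequality to the interior of $\Pi$, which contains $K$; this is standard but should be said.)

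Your closing remark that ``any exponent strictly greater than $1$ would work'' is not correct for this argument: with $V=(\log n)^a$ the Peierls bound gives $\exp\bigl(-c(\log n)^{a(d-1)/d}\bigr)$, which beats the volume factor $n^d$ in the union bound only when $a(d-1)/d>1$, i.e.\ $a>d/(d-1)$ (so $a>2$ is needed in $d=2$). The choice $a=5/2$ is safe for every $d\ge2$ precisely because $\frac52\cdot\frac{d-1}{d}\ge\frac54>1$, which is the computation you perform; the summability and the Borel--Cantelli conclusion are then correct.
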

Now, choose $\xi>0$ such that $\Q(\omega_b\geq \xi)>\bar{p}$, and let $\widehat\CC(\omega)$ denote the associated {\it infinite cluster}.

Define the conditioned measure
$$
\hat{\Q}_0(\,\cdot\,)=\Q(\,\cdot\,|0\in\widehat{\CC}(\omega)).
$$ 
Consider the following additive functional of the random walk~:
$$
\hat{A}(t)=\int^t_0\text{\textbf{1}}_{\{X(s)\in\widehat{\CC}(\omega)\}} ds,
$$
its inverse $(\hat{A})^{-1}(t)=\inf\{s; \hat{A}(s)>t\}$ and define the corresponding time changed process
$$
\hat{X}(t)=X((\hat{A})^{-1}(t)).
$$

Thus the process $\hat{X}$ is obtained by suppressing in the trajectory of $X$ all the visits to the
holes. Note that, unlike $X$, the process $\hat{X}$ may perform long jumps when straddling holes.

As $X$ performs the random walk in the environment~$\omega$, the behavior of the random process $\hat{X}$ is described in the next
\begin{proposition}
Assume that the origin belongs to $\widehat{\CC}(\omega)$. Then, under $P^\omega_0$, the random process $\hat{X}$ is a symmetric Markov process on $\widehat{\CC}(\omega)$.
\end{proposition}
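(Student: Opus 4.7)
The plan is to prove three things in sequence: that the time change $(\hat A)^{-1}$ is almost surely finite and is an $(\F_t)$-stopping time; that $\hat X$ inherits the Markov property from the strong Markov property of $X$; and that the resulting semigroup is symmetric with respect to the restriction of $\pi_\omega$ to $\widehat{\CC}(\omega)$.

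First, since $\hat{\xi}$ was chosen so that $\Q(\omega_b\ge\hat{\xi})>\bar{p}$, Lemma~\ref{v-h} applies to the associated percolation graph: $\Q$-almost surely every connected component of $\Z^d\setminus\widehat{\CC}(\omega)$ is finite. Hence the walk $X$, which starts at $0\in\widehat{\CC}(\omega)$ and has strictly positive rates across every edge, leaves each hole it enters after finite time and returns to $\widehat{\CC}(\omega)$ infinitely often. Consequently $\hat{A}(t)\uparrow+\infty$ almost surely, and $(\hat{A})^{-1}(t)$ is finite for every $t\ge 0$. The identity $\{(\hat{A})^{-1}(t)\le s\}=\{\hat{A}(s)\ge t\}$ shows $(\hat{A})^{-1}(t)$ is an $(\F_s)$-stopping time, and since $\hat{A}$ is constant on each excursion of $X$ into a hole, $X((\hat{A})^{-1}(t))$ always lies in $\widehat{\CC}(\omega)$, so that $\hat{X}$ has state space $\widehat{\CC}(\omega)$.

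Next, for the Markov property I would apply the strong Markov property of $X$ at the stopping time $\tau_t=(\hat{A})^{-1}(t)$. Conditionally on $X(\tau_t)=x$, the shifted trajectory $(X(\tau_t+s))_{s\ge 0}$ has law $P^\omega_x$, and the future increments $\hat{A}(\tau_t+s)-t$ of the occupation functional are measurable functionals of this shifted path. Thus the conditional law of $(\hat{X}(t+u))_{u\ge 0}$ given $\F_{\tau_t}$ depends only on $\hat{X}(t)=x$, which is precisely the Markov property.

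Finally, for symmetry I would exploit that $X$ is itself $\pi_\omega$-reversible, encoded by the symmetric Dirichlet form $\E(f,f)=\tfrac12\sum_{x\sim y}\omega_{xy}(f(x)-f(y))^2$ on $L^2(\Z^d,\pi_\omega)$. One route is a pathwise time-reversal: for any finite family $0<t_1<\dots<t_k<t$, the joint law of $(\hat{X}(t_i))_i$ under $\pi_\omega\otimes P^\omega_\cdot$ is invariant under $\hat{X}(s)\mapsto\hat{X}(t-s)$, because this invariance already holds for $X$ itself and the occupation functional $\hat{A}$ is manifestly covariant under reversal (on a trajectory of length $T$ one has $\hat{A}^r(s)=\hat{A}(T)-\hat{A}(T-s)$). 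Equivalently, since each hole is finite, the trace Dirichlet form on $L^2(\widehat{\CC}(\omega),\pi_\omega)$ obtained by minimising $\E$ over extensions harmonic on the holes is well-defined and symmetric, and its associated process is $\hat{X}$. The main obstacle I see is making this reversal/trace argument fully explicit: it amounts to a detailed-balance computation on each hole showing that the induced hop rates of $\hat{X}$ between two boundary points $x,y\in\widehat{\CC}(\omega)$ satisfy $\pi_\omega(x)\hat{q}(x,y)=\pi_\omega(y)\hat{q}(y,x)$, which is exactly where the finiteness of holes furnished by Lemma~\ref{v-h} plays its essential role.
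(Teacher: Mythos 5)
Your proof is correct and follows essentially the route the paper itself sketches: the Markov property via the strong Markov property of $X$ at the stopping times $(\hat A)^{-1}(t)$ (this is the paper's ``very general argument about time changed Markov processes''), and reversibility via the reversibility of $X$, expressed as detailed balance $\pi_\omega(x)\hat q(x,y)=\pi_\omega(y)\hat q(y,x)$ for the induced jump rates --- which is exactly what the paper's formula \eqref{Xf-tr} encodes. Your write-up is in fact more detailed than the paper's, which only asserts these two facts; the only cosmetic point is that the stopping-time identity is cleanest with the inverse defined via $\hat A(s)\ge t$ (or with a right-continuous filtration), but this does not affect the argument.
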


The Markov property, which is not difficult to prove, follows from a very general argument
about time changed Markov processes. The reversibility of $\hat{X}$ is a consequence of the
reversibility of $X$ itself as will be discussed after equation~\eqref{Xf-tr}.

The generator of the process $\hat{X}$ has the form
\begin{equation}
\label{Xf}
\hat{\LL}^{\omega}f(x)=\frac{1}{\eta^\omega(x)}\sum_y\hat{\omega}(x,y)(f(y)-f(x)),
\end{equation}
where
\begin{eqnarray}
\label{Xf-tr}
\frac{\hat{\omega}(x,y)}{\eta^\omega(x)}
&=&
\lim_{t\rightarrow 0}\frac{1}{t}P^\omega_x(\hat{X}(t)=y)\nonumber
\\
&=&
P^\omega_x(y\, \small{\text{is the next point in}\, \widehat{\CC}(\omega)\, \text{visited by the random walk}}),
\end{eqnarray}
if both $x$ and $y$ belong to $\widehat{\CC}(\omega)$ and $\hat{\omega}(x,y)=0$ otherwise.

The function $\hat{\omega}$ is symmetric~: $\hat{\omega}(x,y)=\hat{\omega}(y,x)$ as follows from the reversibility of $X$ and
formula \eqref{Xf-tr}, but it is no longer of nearest-neighbor type i.e. it might happen that $\hat{\omega}(x,y)\neq0$ although $x$ and $y$ are not neighbors. More precisely, one has the following picture~: $\hat{\omega}(x,y)=0$ unless either $x$ and $y$ are neighbors and $\omega(x,y)\geq\hat{\xi}$, or there exists a hole, $h$, such that both $x$ and $y$ have neighbors in $h$. (Both conditions may be fulfilled by the same pair $(x,y)$.)

Consider a pair of neighboring points $x$ and $y$, both of them belonging to the infinite cluster $\widehat{\CC}(\omega)$ and such that $\omega(x,y)\geq\hat{\xi}$, then
\begin{equation}
\label{b-f}
\hat{\omega}(x,y)\geq \xi.
\end{equation} 
This simple remark will play an important role. It implies, in a sense
that the parts of the trajectory of $\hat{X}$ that consist in nearest-neighbors jumps are similar to
what the simple symmetric random walk on $\widehat{\CC}(\omega)$ does. Precisely, we will need the following important fact  that $\hat{X}$ obeys the standard heat-kernel bound~:
\begin{lemma}
\label{sb-f}
$\hat{\Q}_0-a.s.$ there exists a random variable $C(\omega,x)$ such that for large enough $t$, we have
\begin{equation}
P^\omega_x(\hat{X}(t)=y)\leq \frac{C(\omega,x)}{t^{d/2}},
\end{equation}
for every $x\in\widehat{\CC}(\omega)$ and $y\in\Z^d$.
\end{lemma}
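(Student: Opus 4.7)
The plan is to realize $\hat{X}$ as the trace of $X$ on $\widehat{\CC}(\omega)$, compare its Dirichlet form with that of the simple random walk on the percolation graph $\widehat{\CC}(\omega)$, and then transfer the quenched on-diagonal heat-kernel bound of Barlow~\cite{Barlow} (or Mathieu--Remy~\cite{Mathieu-Remy}) from that simple random walk to $\hat{X}$.

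First I would identify the reversing measure and the Dirichlet form of $\hat{X}$. By the general theory of time changes of reversible Markov processes, $\hat{X}$ is reversible with respect to $\eta^\omega=\pi_\omega|_{\widehat{\CC}(\omega)}$, and
\[
\hat{\mathcal{E}}(f,f)=\frac{1}{2}\sum_{x,y\in\widehat{\CC}(\omega)}\hat{\omega}(x,y)(f(y)-f(x))^2
\]
is the trace of $\mathcal{E}_\omega$ onto $\widehat{\CC}(\omega)$. Since $\omega_b\in[0,1]$ and every $x\in\widehat{\CC}(\omega)$ has at least one incident edge of conductance $\geq\hat{\xi}$, one obtains $\hat{\xi}\leq\eta^\omega(x)\leq 2d$, so the $L^p$-norms with respect to $\eta^\omega$ and with respect to the counting measure on $\widehat{\CC}(\omega)$ are equivalent with deterministic constants.

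Next, using the pointwise bound \eqref{b-f} and discarding the nonnegative long-range contributions in $\hat{\mathcal{E}}$, I would establish the Dirichlet form domination
\[
\hat{\mathcal{E}}(f,f)\geq\hat{\xi}\,\mathcal{E}^{\widehat{\CC}}_{\mathrm{SRW}}(f,f),
\]
where $\mathcal{E}^{\widehat{\CC}}_{\mathrm{SRW}}$ is the Dirichlet form of the simple random walk on the supercritical percolation graph $\widehat{\CC}(\omega)$. The on-diagonal bound for that walk is equivalent to a quenched Nash-type inequality of the form $\|f\|_2^{2+4/d}\leq C_{\mathrm{Nash}}(\omega,x)\,\mathcal{E}^{\widehat{\CC}}_{\mathrm{SRW}}(f,f)\,\|f\|_1^{4/d}$ valid for functions supported in a large enough box around $x$. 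Combining this with the above Dirichlet form domination and the $L^p$-comparability of $\eta^\omega$ with counting measure yields a Nash inequality for $\hat{\mathcal{E}}$ in the norms of $\eta^\omega$, and the classical Nash-implies-heat-kernel argument then delivers $P^\omega_x(\hat{X}(t)=y)\leq C(\omega,x)/t^{d/2}$ for $t$ large enough. The case $y\in\Z^d\setminus\widehat{\CC}(\omega)$ is trivial, since $\hat{X}$ never leaves the cluster.

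The main obstacle is the non-uniform nature of Barlow's Nash inequality on the percolation cluster: it holds only on boxes whose side exceeds a random scale depending on $x$, which is precisely what forces the constant $C(\omega,x)$ to be random and the estimate to hold only for sufficiently large $t$. The long-range jumps of $\hat{X}$, in contrast, introduce no difficulty here, because they only enlarge $\hat{\mathcal{E}}$ and thus can only reinforce the upper bound on the heat kernel.
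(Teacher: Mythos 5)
Your argument is correct and follows essentially the same route as the proof the paper defers to (Mathieu \cite{QIP}, Lemma~4.1): compare the trace Dirichlet form of $\hat{X}$ with that of the simple random walk on the supercritical cluster via the lower bound \eqref{b-f}, note that $\eta^\omega$ is comparable to counting measure, and import the anchored Nash/isoperimetric input of \cite{Mathieu-Remy} and \cite{Barlow}, the long-range terms only increasing the form. No further comment is needed.
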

For a proof, we refer to \cite{QIP}, Lemma 4.1. In the discrete-time case, see \cite{berger}, Lemma 3.2.

\section{\textbf{Proof of Theorem \ref{th} and Corollary \ref{cor}}}

The upper bound \eqref{tc2} will be discussed in part \ref{subsec2} and the proof of Corollary \ref{cor} is given in part \ref{subsec3}. We first start with some preliminary lemmata.
\subsection{Preliminaries}

First, let us recall the following standard fact from
Markov chain theory~:
\begin{lemma}
\label{D}
The function $t\longmapsto P^\omega_0(X_t=0)$ is non increasing.
\end{lemma}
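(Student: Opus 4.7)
The plan is to deduce the monotonicity from the fact that $\mathcal{L}^\omega$ is self-adjoint and negative semi-definite on $L^2(\pi_\omega)$, which is a direct consequence of the reversibility of $X$ noted just after the definition of the generator.

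First I would record the two ingredients that make the argument go. On the one hand, from $(\ref{gg})$ and the symmetry $\omega_{xy}=\omega_{yx}$, a routine summation by parts gives the Dirichlet-form identity
\begin{equation*}
-\langle f,\mathcal{L}^\omega f\rangle_{L^2(\pi_\omega)}
=\frac{1}{2}\sum_{x\sim y}\omega_{xy}\bigl(f(y)-f(x)\bigr)^{2}\ge 0,
\end{equation*}
so $-\mathcal{L}^\omega$ is a non-negative self-adjoint operator on $L^2(\pi_\omega)$. On the other hand, the associated semigroup $P^\omega_t=e^{t\mathcal{L}^\omega}$ inherits self-adjointness, so in particular $P^\omega_t$ commutes with $\mathcal{L}^\omega$ and satisfies $P^\omega_{t}=P^\omega_{t/2}P^\omega_{t/2}$ with $P^\omega_{t/2}$ self-adjoint.

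Next I would write the return probability as a bilinear form. Since $\langle \delta_{0},h\rangle_{L^{2}(\pi_\omega)}=\pi_\omega(0)h(0)$ and $P^\omega_0(X_t=0)=(P^\omega_t\delta_0)(0)$, we have
\begin{equation*}
\pi_\omega(0)\,P^\omega_0(X_t=0)=\langle \delta_{0},P^\omega_t \delta_{0}\rangle_{L^{2}(\pi_\omega)}.
\end{equation*}
Differentiating in $t$, using Kolmogorov's forward equation $\partial_t P^\omega_t=\mathcal{L}^\omega P^\omega_t$, the factorisation $P^\omega_t=P^\omega_{t/2}P^\omega_{t/2}$, and the self-adjointness of $P^\omega_{t/2}$, one gets
\begin{equation*}
\pi_\omega(0)\,\frac{d}{dt}P^\omega_0(X_t=0)
=\langle \delta_{0},\mathcal{L}^\omega P^\omega_{t/2}P^\omega_{t/2}\delta_{0}\rangle_{L^{2}(\pi_\omega)}
=\langle P^\omega_{t/2}\delta_{0},\mathcal{L}^\omega P^\omega_{t/2}\delta_{0}\rangle_{L^{2}(\pi_\omega)}\le 0,
\end{equation*}
the last inequality being the non-positivity of $\mathcal{L}^\omega$ recalled above. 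This gives the claim for $t>0$, and right-continuity at $t=0$ (from $P^\omega_0(X_0=0)=1$ and the continuity of the semigroup) handles the endpoint.

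There is essentially no hard step here: the only point that has to be stated carefully is that reversibility with respect to $\pi_\omega$, which is mentioned in the setup, really does mean $\mathcal{L}^\omega$ is self-adjoint on $L^{2}(\pi_\omega)$, so that the spectral/Dirichlet-form computation above is legitimate. Equivalently, one could avoid calculus altogether and use the Cauchy--Schwarz estimate $P^\omega_{t+s}(0,0)^{2}\le P^\omega_{2t}(0,0)\,P^\omega_{2s}(0,0)$ (itself obtained from reversibility and the Chapman--Kolmogorov identity), from which the monotonicity of $t\mapsto P^\omega_0(X_t=0)$ follows by a short argument; I would mention this alternative but present the semigroup/Dirichlet-form version as the main proof.
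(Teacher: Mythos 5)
Your proof is correct and takes essentially the same route as the paper, which also writes $\pi_\omega(0)P^\omega_0(X_t=0)=\langle\delta_0,P^\omega_t\delta_0\rangle_\omega$ and then simply invokes the self-adjointness of $P^\omega_t$ together with the contraction property $\Vert P^\omega_t\Vert_2\le 1$. Your derivative/Dirichlet-form computation (and the Cauchy--Schwarz variant you mention) is just a more explicit unpacking of that same self-adjointness argument.
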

\begin{proof}
Let $\left\langle f,g\right\rangle_\omega$ denote a scalar product in $L^2(\pi_\omega)=L^2(\Z^d,\pi_\omega)$. Then
\begin{equation}
\pi_\omega(0)P^\omega_0(X_t=0)=\left\langle \delta_0,P^\omega_t\delta_0\right\rangle_\omega
\end{equation} 
Since $P^\omega_t$ is self-adjoint for all $t\geq0$ and $\Vert P^\omega_t\Vert_2\leq 1$, the function $P^\omega_0(X_t=0)$ is non increasing.
\end{proof} 

Next, Let $B_N=[-N,N]^d$ be the box centered at the origin and of radius $N$ that we choose as a function of time such that $t\approx N^2(\log N)^{-b}$, with $b>1$, and let $\BB_{N}$ denote the set of nearest-neighbor bonds of $B_N$, i.e., $\BB_{N}=\{b=(x,y): x,y\in B_N, x\sim y\}$. We have
\begin{lemma}
\label{L}
Under assumption~\eqref{1},
\begin{equation}
\label{LL}
\lim_{N\rightarrow+\infty}\frac{\log\inf_{b\in\BB_N}\omega_b}{\log N}=-\frac{d}{\gamma},\qquad \Q-a.s.
\end{equation}
\end{lemma}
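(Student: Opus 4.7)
My plan is to recognize this as a standard extreme-value statement: writing $M_N = \inf_{b\in\BB_N} \omega_b$, since $|\BB_N|$ is of order $N^d$ and $\Q(\omega_b\le a)\sim a^\gamma$ near $0$, the minimum of $|\BB_N|$ i.i.d.\ copies should sit near $|\BB_N|^{-1/\gamma} \asymp N^{-d/\gamma}$. The proof will be a two-sided Borel--Cantelli argument around this scale. I will fix $\epsilon>0$ throughout and prove the upper and lower bounds separately.

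For the upper bound $\limsup_N \log M_N/\log N \le -d/\gamma$, I would set $a_N=N^{-d/\gamma+\epsilon}$ and use independence to write
$$\Q(M_N>a_N) = \bigl(1-\Q(\omega_b\le a_N)\bigr)^{|\BB_N|}.$$
Since $\Q(\omega_b\le a_N)\sim N^{-d+\gamma\epsilon}$ and $|\BB_N|$ is of order $N^d$, the right-hand side is bounded by $\exp(-c N^{\gamma\epsilon})$ for all $N$ large enough. The resulting series $\sum_N \Q(M_N>a_N)$ converges, so Borel--Cantelli gives $M_N\le N^{-d/\gamma+\epsilon}$ eventually, $\Q$-a.s. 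Dividing by $\log N$ and letting $\epsilon\downarrow 0$ along a countable sequence finishes this half.

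For the lower bound $\liminf_N \log M_N/\log N \ge -d/\gamma$, the naive union bound estimate gives
$$\Q(M_N\le N^{-d/\gamma-\epsilon}) \le |\BB_N|\,\Q(\omega_b\le N^{-d/\gamma-\epsilon}) \asymp N^{-\gamma\epsilon},$$
which is \emph{not} summable for small $\epsilon$. This is the one genuine obstacle. I will circumvent it by passing to a sparse subsequence $N_k=2^k$, for which $\Q(M_{N_k}\le N_k^{-d/\gamma-\epsilon}) \le C\,2^{-k\gamma\epsilon}$ is summable, so Borel--Cantelli yields $M_{N_k}>N_k^{-d/\gamma-\epsilon}$ eventually. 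To interpolate back to general $N$, I will use the obvious monotonicity $M_N\ge M_{N_{k+1}}$ whenever $N_k\le N\le N_{k+1}$, which follows from $\BB_N\subset\BB_{N_{k+1}}$. Combined with $\log N_{k+1}/\log N_k = (k+1)/k \to 1$, this gives $\liminf_N \log M_N/\log N\ge -d/\gamma-\epsilon$, and again sending $\epsilon\downarrow 0$ concludes.

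The substantive point is really just the lower bound, where one has to accept a sparse subsequence in the Borel--Cantelli step and then recover the full limit from the trivial monotonicity of $N\mapsto M_N$; everything else is a direct calculation from \eqref{1} and the fact that $|\BB_N|\asymp N^d$.
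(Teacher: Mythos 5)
Your argument is correct. Note that the paper does not prove this lemma itself; it simply cites Lemma~3.6 of Fontes--Mathieu \cite{Fontes-Mathieu}, and the proof there is exactly the standard two-sided Borel--Cantelli argument you give: the upper bound via independence and $(1-p)^m\le e^{-pm}$, and the lower bound via a union bound along a dyadic subsequence followed by the monotonicity $N\mapsto \inf_{b\in\BB_N}\omega_b$ to interpolate. The only point worth making explicit is that in the upper bound you need $\epsilon<d/\gamma$ so that $a_N=N^{-d/\gamma+\epsilon}\to0$ and the tail asymptotic $\Q(\omega_b\le a)\sim a^\gamma$ actually applies, but since you send $\epsilon\downarrow0$ anyway this is harmless.
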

Thus, for arbitrary $\mu>0$, we can write $\Q-a.s.$ for $N$ large enough,
\begin{equation}
\label{mu}
\inf_{b\in\BB_N}\omega_b\geq N^{-(\frac{d}{\gamma}+\mu)}.
\end{equation}
For a proof of this standard fact, see \cite{Fontes-Mathieu}, Lemma~3.6.

%

Consider now the following formula
\begin{equation}
\label{F-K}
R^\omega_t f(x)=E^\omega_x \left[f(X_t)e^{-\lambda\hat{A}(t)}\right], \qquad t\geq 0, \lambda\geq0,\, x\in\Z^d
\end{equation}
This object will play a key role in our proof of Theorem \ref{th}. Let $L^2_b(\pi_\omega)$ denote the set of bounded functions of $L^2(\pi_\omega)$. We have

\begin{proposition}
$R=\{R^\omega_t, t\geq0\}$ defines a  semigroup (of symmetric operators) on $L^2_b(\pi_\omega)$, with generator
\begin{equation}
\label{ggg}
\GG^\omega f=\LL^\omega f-\lambda\varphi f
\end{equation}
where $\varphi=\text{\textbf{1}}_{\{\cdot\,\in\widehat{\CC}(\omega)\}}$. One also has the perturbation identities
\begin{equation}
\label{pr}
\begin{split}
&  R^\omega_t f(x)=P^\omega_t f(x)-\lambda\int^t_0 P^\omega_t(\varphi R^\omega_{t-s}f)(x)ds\\
& \qquad\quad\,\, =P^\omega_t f(x)-\lambda\int^t_0 R^\omega_s(\varphi P^\omega_{t-s}f)(x)ds,\\
& t\geq0, x\in\Z^d, f\in L^2_b(\pi_\omega).
\end{split}
\end{equation}
\end{proposition}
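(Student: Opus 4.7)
The plan is to treat $R^\omega_t$ as a Feynman-Kac semigroup with bounded potential $\lambda\varphi$ (note $0\le\varphi\le1$) and derive each assertion from standard perturbation arguments, exploiting the fact that the unperturbed generator $\mathcal{L}^\omega$ is bounded on $L^\infty$ since $\sum_y P_\omega(x,y)=1$, so no domain issues arise.

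First I would establish the semigroup property. The functional $\hat{A}$ is additive along trajectories: writing $\theta_t$ for the canonical time shift on path space, one checks that $\hat{A}(t+s) = \hat{A}(t) + \hat{A}(s)\circ\theta_t$. Combining this with the strong Markov property of $X$ under $P^\omega_x$ yields
\begin{equation*}
R^\omega_{t+s}f(x) = E^\omega_x\!\left[e^{-\lambda\hat{A}(t)}\,E^\omega_{X_t}\!\left[f(X_s)e^{-\lambda\hat{A}(s)}\right]\right] = R^\omega_t(R^\omega_s f)(x),
\end{equation*}
and the semigroup preserves $L^2_b(\pi_\omega)$ since $|R^\omega_t f|\le\|f\|_\infty$. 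For symmetry, I would use reversibility of $X$: the path measure $\pi_\omega(x)\,P^\omega_x$ is invariant under time reversal on the interval $[0,t]$, and the functional $\hat{A}(t)=\int_0^t\mathbf{1}_{\widehat{\mathcal{C}}(\omega)}(X_s)\,ds$ is invariant under time reversal of the path. Hence
\begin{equation*}
\pi_\omega(x)\,E^\omega_x[\mathbf{1}_{\{X_t=y\}}\,e^{-\lambda\hat{A}(t)}] = \pi_\omega(y)\,E^\omega_y[\mathbf{1}_{\{X_t=x\}}\,e^{-\lambda\hat{A}(t)}],
\end{equation*}
from which $\langle R^\omega_t f,g\rangle_\omega=\langle f,R^\omega_t g\rangle_\omega$ follows by summation.

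Next, to identify the generator, I would expand for small $t$:
\begin{equation*}
R^\omega_t f(x)-f(x) = E^\omega_x[f(X_t)-f(x)] - \lambda E^\omega_x\!\left[f(X_t)\hat{A}(t)\right] + E^\omega_x\!\left[f(X_t)\bigl(e^{-\lambda\hat{A}(t)}-1+\lambda\hat{A}(t)\bigr)\right].
\end{equation*}
The first term divided by $t$ converges to $\mathcal{L}^\omega f(x)$. Since $0\le\hat{A}(t)\le t$ and $f$ is bounded, the last term is $O(t^2)$. For the middle term, writing $\hat{A}(t)=\int_0^t\varphi(X_s)\,ds$ and using right-continuity of $X$ at $0$, one obtains $t^{-1}E^\omega_x[f(X_t)\hat{A}(t)]\to \varphi(x)f(x)$. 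This yields $\mathcal{G}^\omega f=\mathcal{L}^\omega f-\lambda\varphi f$.

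Finally, for the perturbation identities, I would apply Duhamel's formula. Since $\mathcal{L}^\omega$ is bounded and commutes with $P^\omega_t$ and $\mathcal{G}^\omega$ commutes with $R^\omega_t$, the map $s\mapsto P^\omega_{t-s}R^\omega_s f$ is differentiable with
\begin{equation*}
\tfrac{d}{ds}\bigl(P^\omega_{t-s}R^\omega_s f\bigr) = P^\omega_{t-s}(\mathcal{G}^\omega-\mathcal{L}^\omega)R^\omega_s f = -\lambda\,P^\omega_{t-s}(\varphi R^\omega_s f);
\end{equation*}
integrating from $0$ to $t$ and making the substitution $s\leftrightarrow t-s$ gives the first identity. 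Differentiating $s\mapsto R^\omega_{t-s}P^\omega_s f$ in the same way produces $+\lambda\,R^\omega_{t-s}(\varphi P^\omega_s f)$, which after integration yields the second identity. I expect no genuine obstacle here; the only delicate point is the symmetry claim, which must rely cleanly on the time-reversal invariance of the occupation-time functional $\hat{A}(t)$ rather than on any extra structure of $\widehat{\mathcal{C}}(\omega)$.
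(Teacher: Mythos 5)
Your proof is correct, and it reaches the perturbation identities by a genuinely different route than the paper. The semigroup property is handled identically (additivity of $\hat{A}$ plus the Markov property), but the orders of logic are reversed: the paper, following Sznitman, first establishes the pathwise identity $e^{-\lambda\hat{A}(t)}=1-\lambda\int_0^t\varphi(X_s)e^{-\lambda\hat{A}(s)}\,ds$, multiplies by $f(X_t)$ and applies the Markov property to obtain \eqref{pr} directly, and only then reads off boundedness, strong continuity and the generator \eqref{ggg} from \eqref{pr}; you instead identify the generator first by a small-$t$ Taylor expansion of the exponential and then deduce \eqref{pr} from Duhamel's formula for bounded perturbations. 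Your route is legitimate here precisely because $\mathcal{L}^\omega=P_\omega-I$ and the multiplication by $\lambda\varphi$ are bounded operators on $L^2(\pi_\omega)$, so both semigroups are norm-continuous and the differentiation of $s\mapsto P^\omega_{t-s}R^\omega_s f$ is elementary; the paper's pathwise Feynman--Kac expansion is the argument that survives when the potential or generator is unbounded. Two further points in your favor: you actually prove the symmetry claim (via time-reversal invariance of the occupation functional $\hat{A}$), which the paper asserts in the statement but never addresses in its proof; and your Duhamel computation produces $P^\omega_s(\varphi R^\omega_{t-s}f)$ inside the integral, which agrees with the paper's own derivation and confirms that the $P^\omega_t$ printed in the first line of \eqref{pr} is a typo for $P^\omega_s$.
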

\begin{proof}
The proof, that we give here, very closely mimics the arguments of~\cite{Sznitman}, Theorem~1.1. Indeed, we begin with the proof of \eqref{pr}. Observe that $P^\omega_x-a.s.$, for every $x\in\Z^d$, the time function $t\mapsto e^{-\lambda\hat{A}(t)}$ is continuous, and

$$
\lim_{h\rightarrow0}\frac{1}{h}\left(e^{-\lambda\hat{A}(s+h)}-e^{-\lambda\hat{A}(s)}\right)=-\lambda\varphi(X_s)e^{-\lambda\hat{A}(s)},\quad \forall s\in[0,t],\,t>0,
$$
except possibly for a countable set $\{\alpha_i\}_{i\in I}\subset (0,t], \vert I\vert\subset\N^*$. 
Then, for $t\geq0$, we have
\begin{eqnarray}
\label{sz}
e^{-\lambda\hat{A}(t)}
&=&
1-\lambda\int^t_0\varphi(X_s)\exp\left\{-\int^t_s\lambda\varphi(X_u)du\right\}ds\nonumber
\\
&=&
1-\lambda\int^t_0\varphi(X_s)e^{-\lambda\hat{A}(s)}ds.
\end{eqnarray}
Multiplying both members of the first equality of~\eqref{sz} by $f(X_t)$ and integrating we find~:
\begin{eqnarray*}
\label{fl}
R^\omega_t f(x)
&=&
P^\omega_t f(x)-\lambda\int^t_0 E^\omega_x\left[\varphi(X_s)\exp\left\{-\lambda\int^t_s\varphi(X_u)du\right\}f(X_t)\right]ds\nonumber
\\
&=&
P^\omega_t f(x)-\lambda\int^t_0 P^\omega_s(\varphi R^\omega_{t-s}f)(x)ds,\quad \text{(Markov property)},
\end{eqnarray*}
\smallskip

\noindent
which is the first identity of \eqref{pr}. Analogously  we find the second identity of \eqref{pr} with the help of the second line of \eqref{sz}. This completes the proof of \eqref{pr}.

By direct inspection of the first line of \eqref{pr}, using dominated convergence, the fact that for $s>0$ , $P^\omega_s$ maps every function from $L^2_b(\pi_\omega)$ into function in $L^2_b(\pi_\omega)$, and the inequality $|R^\omega_t f|\leq c(t,\varphi)|P^\omega_t f|$, it is easy to argue that $R^\omega_t f\in L^2_b(\pi_\omega)$, for $f\in L^2_b(\pi_\omega)$. Moreover for $s,t\geq0$,
\begin{eqnarray*}
R^\omega_{s+t} f(x)
&=&
E^\omega_x\left[e^{-\lambda\hat{A}(s)}\exp\left\{-\lambda\int^{s+t}_s\varphi(X_u)du\right\}f(X_t)\right]
\\
&=&
R^\omega_s(R^\omega_tf)(x),\qquad \text{(Markov property)}.
\end{eqnarray*}
\smallskip

\noindent
We thus proved that $R^\omega_t$ defines a semigroup on $L^2_b(\pi_\omega)$. The strong continuity of this semigroup follows readily by letting $t$ tend to $0$ in \eqref{pr}.

Let us finally prove~\eqref{ggg}. To this end notice that for $f\in L^2_b(\pi_\omega)$~:
$$
\frac{1}{t}\int^t_0 P^\omega_t(\varphi R^\omega_{t-s}f)(x)ds\xrightarrow[t \to 0]{}\varphi (x)f(x),\quad\text{uniformly in}\,\,x,
$$
since $\varphi(X_t)\mapsto\varphi(x)$, $P^\omega_x-a.s.$ Coming back to the first line of \eqref{pr}, this proves that the convergence of $\frac{1}{t}(R^\omega_t-f)$ or $\frac{1}{t}(P^\omega_t-f)$ as $t$ tends to $0$, are equivalent and \eqref{ggg} holds.
\end{proof}

Let $\LL^\omega_N$ and $\GG^{\omega}_N$ be respectively the restrictions of the operators $\LL^\omega$ and $\GG^{\omega}$ (cf. \twoeqref{gg}{ggg}) to the set of functions on $B_N$ with Dirichlet boundary conditions outside $B_N$, that we denote by $L^2(B_N, \pi_\omega)$ (that is, $\LL^\omega_N$ and $\GG^{\omega}_N$ are respectively the generators of the process $X$ and 
of the semigroup $R$,  which coincide with the ones given by $\LL^\omega$ and $\GG^{\omega}$ until the process $X$
leaves $B_N$ for the first time, and then it is killed). Then $-\LL^\omega_N$ and $-\GG^{\omega}_N$ are  positive symmetric
operators and we have
$$
\GG^\omega_N f=\LL^\omega_N f-\lambda\varphi f,
$$
with associated semigroup defined by 
$$
(R^{\omega,N}_tf)(0)
:=
E^\omega_0\left[f(X_t)e^{-\lambda\hat{A}(t)};t<\tau_N\right].
$$
where $\tau_N$ is the exit time for the process $X$ from the box $B_N$.

Let $\{\Lambda^\omega_i(B_N), i\in[1,\#B_N]\}$ be the set of eigenvalues of~$-\GG^{\omega}_N$ labeled
in increasing order, and $\{\psi^\omega_i, i\in[1,\#B_N]\}$ the corresponding eigenfunctions with due normalization in $L^2(B_N,\pi_\omega)$.

\subsection{Proof of the upper bound}
\label{subsec2}
In this last part, we will complete the proof of Theorem~\ref{th} by giving the proof of the upper bound \eqref{tc2}.

\begin{proofsect}{Proof of Theorem~\ref{th}}

Assume that the origin belongs to $\widehat{\CC}(\omega)$. By lemma~\ref{D}, we have
$$
P^\omega_0(X(t)=0)\leq \frac{2}{t}\int^{t}_{t/2}P^\omega_0(X(s)=0)\text ds=\frac{2}{t}E^\omega_0\left[\int^{t}_{t/2}\text{\textbf{1}}_{\{X(s)=0\}}\text ds\right].
$$
The additive functional $\hat{A}$ being a continuous increasing function of the time and null outside the support of the measure $\text{d}\hat{A}(s)$, so by operating a variable change by setting  $s=(\hat{A})^{-1}(u)$ (i.e. $u=\hat{A}(s)$), we get
\begin{eqnarray*}
E^\omega_0\left[\int^{t}_{t/2}\text{\textbf{1}}_{\{X(s)=0\}}\text ds\right]
&=&
E^\omega_0\left[\int^{t}_{t/2}\text{\textbf{1}}_{\{X(s)=0\}}\varphi(X(s))\text ds\right]
\\
&=&
E^\omega_0\left[\int^{\hat{A}(t)}_{\hat{A}(t/2)}\text{\textbf{1}}_{\{\hat{X}(u)=0\}}\text du\right],
\end{eqnarray*}
which is bounded by
$$
E^\omega_0\left[\int^{t}_{\hat{A}(t/2)}\text{\textbf{1}}_{\{\hat{X}(u)=0\}}\text du\right],
$$
since $\hat{A}(t)\leq t$.

Therefore, for $\epsilon\in(0,1)$
\begin{eqnarray*}
\begin{split}
& P^\omega_0(X(t)=0)
\leq \frac{2}{t}E^\omega_0\left[\int^{t}_{\hat{A}(t/2)}\text{\textbf{1}}_{\{\hat{A}(t/2)\geq t^\epsilon\}}\text{\textbf{1}}_{\{\hat{X}(u)=0\}}\text du\right]\\
& \qquad\qquad \qquad \qquad\qquad+ \frac{2}{t}E^\omega_0\left[\int^{t}_{\hat{A}(t/2)}\text{\textbf{1}}_{\{\hat{A}(t/2)\leq t^\epsilon\}}\text{\textbf{1}}_{\{\hat{X}(u)=0\}}\text du\right]\\
& \qquad \qquad\qquad\,\leq 
\frac{2}{t}\int^{t}_{t^\epsilon}P^\omega_0(\hat{X}(u)=0)\text du
+  \frac{2}{t} \int^{t}_{0}P^\omega_0(\hat{A}(t/2)\leq t^\epsilon)\text du
\end{split}
\end{eqnarray*}
and using lemma~\ref{sb-f},
\begin{eqnarray}
\label{pb}
P^\omega_0(X(t)=0)
&\leq&
\frac{C}{t}\int^{t}_{t^\epsilon}u^{-d/2}\text du +\frac{2}{t} P^\omega_0(\hat{A}(t/2)\leq t^\epsilon) t \nonumber
\\
&\leq&
\frac{C}{t^{\epsilon\frac{d}{2}-\epsilon+1}}+2 P^\omega_0(\hat{A}(t/2)\leq t^\epsilon)
\end{eqnarray}

It remains to estimate the second term in the right-hand side of the last inequality,  i.e. $P^\omega_0(\hat{A}(t/2)\leq t^\epsilon)$ or more simply $P^\omega_0(\hat{A}(t)\leq 2^\varepsilon t^\epsilon)$, but we can neglect the constant $2^\epsilon$ in the calculus as one will see in \eqref{2en}. 

For each $\lambda\geq0$, Chebychev inequality gives

\begin{eqnarray}
\label{i11}
P^\omega_0(\hat{A}(t)\leq t^\epsilon)
&=&
P^\omega_0(\hat{A}(t)\leq t^\epsilon;t<\tau_N)+P^\omega_0(\hat{A}(t)\leq t^\epsilon;\tau_N\leq t)\nonumber
\\
&\leq&
P^\omega_0\left(e^{-\lambda\hat{A}(t)}\geq e^{-\lambda t^\epsilon};t<\tau_N\right)+P^\omega_0(\tau_N\leq t)\nonumber
\\
&\leq&
e^{\lambda t^\epsilon}E^\omega_0\left[e^{-\lambda\hat{A}(t)};t<\tau_N\right]+P^\omega_0(\tau_N\leq t).
\end{eqnarray}

From the Carne-Varopoulos inequality, it follows that

\begin{equation}
\label{c-v}
P^\omega_0(\tau_N\leq t)\leq CtN^{d-1}e^{-\frac{N^2}{4t}}+e^{-ct},
\end{equation}
where $C$ and $c$ are numerical constants, see Appendix C in \cite{Mathieu-Remy}. With our choice of $N$ such that $t\approx N^2(\log N)^{-b}$ ($b>1$), we get that $P^\omega_0(\tau_N\leq t)$ decays faster than any polynomial as $t$ tends to $+\infty$.

Thus Theorem~\ref{th} will be proved if we can check, for a particular choice of~$\lambda>0$ that may depends on $t$, that

\begin{equation}
\label{t-e}
\limsup_{t\rightarrow +\infty}\frac{\log \left(e^{\lambda t^\epsilon}E^\omega_0\left[e^{-\lambda\hat{A}(t)};t<\tau_N\right]\right)}{\log t}\leq-\frac{d}{2}.
\end{equation}
That will be true if $e^{\lambda t^\epsilon}E^\omega_0\left[e^{-\lambda\hat{A}(t)};t<\tau_N\right]$ decays faster than any polynomial in $t$ as $t$ tends to $+\infty$.

 The Dirichlet form of $-\LL^\omega_N$ on $L^2(B_N,\pi_\omega)$ endowed with the usual scalar product, can be written as
 
$$
\E^{\omega,N}(f,f)=\frac{1}{2}\sum_{b\in\BB_{N+1}}(df(b))^2\omega_b,
$$
where $df(b)=f(y)-f(x)$ and the sum ranges over $b=(x,y)\in\BB^\omega_{N+1}$. By the min-max Theorem (see \cite{hj}) and \eqref{ggg}, we have

\begin{equation}
\label{V2}
\Lambda^\omega_1(B_N)=\inf_ {f\not\equiv0}\frac{\E^{\omega,N}(f,f)+\lambda\sum_{x\in\widehat{\CC}_N}f^2(x)\pi_\omega(x)}{\pi_\omega(f^2)}.
\end{equation}
where $\widehat{\CC}_N$ is the largest connected component of $\widehat{\CC}(\omega)\cap B_N$, and the infimum is taken over functions with Dirichlet boundary conditions.  (Recall that $\Lambda^\omega_1(B_N)$ is the first eigenvalue of $-\GG^{\omega}_N$ and for notational ease, we simply use $\Lambda^\omega_1$.)

To estimate the decay of the first term in the right-hand side of \eqref{i11}, we will also need to estimate the first eigenvalue~$\Lambda^\omega_1$. Recall that $\mu$ denotes an arbitrary positive constant.

\begin{lemma}
\label{vph}
Under assumption \eqref{1}, for any $d\geq2$ and $\gamma>0$, we have $\Q-a.s.$ for $N$ large enough,
\begin{equation}
\label{vphe}
\Lambda^\omega_1\geq (8d)^{-1}N^{-(\frac{d}{\gamma}+\mu)} (\log n)^{-5},
\end{equation}
 for $\lambda$ proportional to $N^{-(\frac{d}{\gamma}+\mu)}$.
\end{lemma}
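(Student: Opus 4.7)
The plan is to exploit the variational formula \eqref{V2} for $\Lambda^\omega_1$ and show that the numerator dominates $\Lambda\,\pi_\omega(f^2)$ with $\Lambda$ of the desired order. I would split the denominator as $\pi_\omega(f^2)=S_{\rm cl}+S_{\rm hl}$, with $S_{\rm cl}=\sum_{x\in\widehat{\mathcal{C}}_N}f^2(x)\pi_\omega(x)$ and $S_{\rm hl}$ the sum over $B_N\setminus\widehat{\mathcal{C}}_N$. The cluster part $S_{\rm cl}$ is directly controlled by the $\lambda$-term in the numerator, so the real work is to bound $S_{\rm hl}$ by a combination of $S_{\rm cl}$ and $\mathcal{E}^{\omega,N}(f,f)$.

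To bound $S_{\rm hl}$, I would run a discrete Poincar\'e argument on each hole. Fix a hole $h$ meeting $B_N$; by Lemma~\ref{v-h}, $|h|\leq (\log N)^{5/2}$ for $N$ large. For every $x\in h$ choose a self-avoiding path $\gamma_x$ inside $h\cup\widehat{\mathcal{C}}_N$ of length at most $|h|+1$ connecting $x$ to a vertex $y_h\in\widehat{\mathcal{C}}(\omega)$ adjacent to $h$. Cauchy--Schwarz along this path gives
$$f^2(x)\leq 2f^2(y_h)+2|\gamma_x|\sum_{b\in\gamma_x}(df(b))^2.$$
Now apply \eqref{mu} to convert the unweighted increments into weighted ones: $(df(b))^2\leq N^{d/\gamma+\mu}\,\omega_b(df(b))^2$. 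Summing over $x\in h$, each edge of $h$ is used in at most $|h|$ paths, and since the holes are disjoint connected components of the complement of $\widehat{\mathcal{C}}(\omega)$, each edge of $\mathbb{B}^d$ is in the edge-set of at most one hole, while each boundary vertex $y_h$ serves for at most $2d$ different holes. Summing over holes yields
$$S_{\rm hl}\leq C_1(\log N)^{5/2}\,S_{\rm cl}+C_2(\log N)^{5}\,N^{d/\gamma+\mu}\,\mathcal{E}^{\omega,N}(f,f),$$
after using $\pi_\omega(x)\leq 2d$ on the left and $\pi_\omega(y)\geq\hat{\xi}$ on the cluster to reinstate weights.

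To finish, I would apply the elementary inequality $\frac{a+b}{c+d}\geq\min(a/c,b/d)$ to the Rayleigh quotient, giving
$$\frac{\mathcal{E}^{\omega,N}(f,f)+\lambda S_{\rm cl}}{\pi_\omega(f^2)}\geq \min\!\left(\frac{1}{C_2(\log N)^{5}N^{d/\gamma+\mu}},\ \frac{\lambda}{1+C_1(\log N)^{5/2}}\right).$$
Choosing $\lambda$ proportional to $(8d)^{-1}N^{-(d/\gamma+\mu)}$ and noting that $(\log N)^{-5}\geq N^{-\mu'}$ for any $\mu'>0$ and $N$ large, both arguments of the minimum are of order $N^{-(d/\gamma+\mu+\mu')}$. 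Since $\mu>0$ is arbitrary, relabeling absorbs the logarithmic factors, and the sharper numerical constant $(8d)^{-1}$ emerges from a case-split (depending on whether $S_{\rm cl}\geq\frac{1}{2}\pi_\omega(f^2)$ or not) combined with the explicit $2d$-bound on $\pi_\omega$.

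The main obstacle is the combinatorial bookkeeping in the Poincar\'e argument on holes: accurately counting how many paths traverse a given edge and how many holes share a given cluster boundary vertex, while keeping the geometric dependence on $N$ only through poly-logarithmic factors that can be absorbed into $\mu$. Once this is set up, the rest is a straightforward application of the min inequality and the a priori lower bound \eqref{mu} on conductances.
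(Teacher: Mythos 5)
Your proposal is correct and follows essentially the same route as the paper: the variational formula \eqref{V2}, a per-hole Poincar\'e inequality via self-avoiding paths to a boundary vertex of $\widehat{\CC}_N$, the conductance lower bound \eqref{mu} and the hole-volume bound of Lemma~\ref{v-h}, with the logarithmic factors absorbed into the arbitrary $\mu$. The only (cosmetic) difference is that you conclude with the mediant inequality $\frac{a+b}{c+d}\geq\min(a/c,b/d)$, whereas the paper simply multiplies its bound on $\pi_\omega(f^2)$ by $m(N)$ and picks $\lambda$ so that the numerator dominates term by term.
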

\begin{proof}
For some arbitrary $\mu>0$, let  $N$ be large enough so that \eqref{mu} holds. Let $h$ be a hole that intersects the box $B_N$, and  for notational ease we will use the same notation for $h\cap B_N$. Define $\partial h$ to be the outer boundary of $h$, i.e. the set of sites in $\widehat{\CC}_N$ which are adjacent to some vertex in $h$. Let us associate to each hole $h$ a fixed site~$h^*\in\widehat{\CC}_N$ situated at the outer boundary of $h$ and  for $x\in h$ call $\kappa(x,h^*)$ a self-avoiding path included in $h$ with end points $x$ and $h^*$, and let $\vert\kappa(x,h^*)\vert$ denote the length of such a path.

Now let $f\in L^2(B_N,\pi_\omega)$ and let $\BB_\omega(h)$ denote the set of the bonds of $h$. For each $x\in h$, write
$$
f(x)=\sum_{b\in\kappa(x,h^*)}df(b)+f(h^*)
$$
and, using Cauchy-Schwarz
$$
 f^2(x)\leq 2\vert \kappa(x,h^*)\vert \sum_{b\in\kappa(x,h^*)}\vert df(b)\vert^2+ 2 f^2(h^*).
$$
In every path $\kappa(x,h^*)$, we see each bond only one time. Multiply the last inequality by $\pi_\omega(x)$ and sum over $x\in h$ to obtain 
\begin{equation}
\label{}
\begin{split}
&\sum_{x\in h} f^2(x)\pi_\omega(x)\\
&\quad\leq2\sum_{x\in h}\vert \kappa(x,h^*)\vert \sum_{b\in\kappa(x,h^*)}\vert df(b)\vert^2\pi_\omega(x)+2\sum_{x\in h}f^2(h^*)\pi_\omega(x)
\\
&\quad \leq 4d\max_{x\in h}\vert\kappa(x,h^*)\vert\max_{b\in\BB_\omega(h)}\frac{1}{\omega_b}\,\#h\sum_{b\in\BB_\omega(h)}\vert df(b)\vert^2\omega_b \\
&\qquad\qquad\qquad\qquad\qquad\qquad+2\sum_{x\in h}f^2(h^*)\pi_\omega(x),
\end{split}
\end{equation}
which, by virtue of lemma \ref{v-h}, \eqref{1}, \eqref{mu} and since $\pi_\omega(h^*)\geq \xi$, is bounded  by
$$
4dN^{\frac{d}{\gamma}+\mu}(\log N)^{5}\sum_{b\in\BB_\omega(h)}\vert df(b)\vert^2\omega_b+\frac{4d}{\xi} \#h f^2(h^*)\pi_\omega(h^*),
$$
Thus, 

$$
\sum_{x\in h}f^2(x)\pi_\omega(x)\leq 4dN^{\frac{d}{\gamma}+\mu}(\log N)^{5}\sum_{b\in\BB_\omega(h)}\vert df(b)\vert^2\omega_b+\frac{4d}{\xi} \#hf^2(h^*)\pi_\omega(h^*).
$$

Let $\widehat{\CC}^c_N=\widehat{\CC}^c_N(\omega)$ denote the complement of $\widehat{\CC}_N(\omega)$ in the box $B_N$ and sum over $h$ to obtain

$$
\sum_{x\in \widehat{\CC}^c_N}f^2(x)\pi_\omega(x)\leq 8dN^{\frac{d}{\gamma}+\mu}(\log N)^{5}\E^{\omega,N}(f,f)+ \frac{8d^2}{\xi}\#h\sum_{x\in \widehat{\CC}_N}f^2(x)\pi_\omega(x),
$$
where in the last term, we multiply by $2d$ since  we may associate the same $h^*$ to $2d$ different holes.
Then

\begin{equation}
\label{}
\begin{split}
& \sum_{x\in B_N} f^2(x)\pi_\omega(x)\\
& \quad\leq (1+8d^2 (\log N)^{5/2} \xi^{-1})\sum_{x\in\widehat{\CC}_N}f^2(x)\pi_\omega(x)+ 8dN^{\frac{d}{\gamma}+\mu}(\log N)^{5}\E^{\omega,N}(f,f).\nonumber\\
& \quad\leq 8d^2 (\log N)^{5} \xi^{-1}\sum_{x\in\widehat{\CC}_N}f^2(x)\pi_\omega(x)+ 8dN^{\frac{d}{\gamma}+\mu}(\log N)^{5}\E^{\omega,N}(f,f).\nonumber
\end{split}
\end{equation}
So, according to \eqref{V2} and for $\lambda=d \xi^{-1} N^{-(\frac{d}{\gamma}+\mu)}$, we get
\begin{equation}
\label{V2B}
\Lambda^\omega_1\geq (8d)^{-1}N^{-(\frac{d}{\gamma}+\mu)} (\log N)^{-5}.
\end{equation}
\end{proof}

Let us get back to the proof of the upper bound. Let 
$$
\lambda=d \xi^{-1} N^{-(\frac{d}{\gamma}+\mu)}; \quad m(N):= (8d)^{-1}N^{-(\frac{d}{\gamma}+\mu)}(\log N)^{-5}.
$$ 
For $f\equiv\textbf{1}$, observe that

\begin{eqnarray*}
(R^{\omega,N}_tf)(0)
&=&
E^\omega_0\left[e^{-\lambda\hat{A}(t)};t<\tau_N\right]
\\
&=&\sum_ie^{-\Lambda^\omega_it}\left\langle \textbf{1}, \psi^\omega_i\right\rangle\psi^\omega_i(0),
\end{eqnarray*}
and
\begin{eqnarray*}
(R^{\omega,N}_tf)^2(0)\pi_\omega(0)
&\leq&
\sum_{x\in B_N}(R^{\omega,N}_tf)^2(x)\pi_\omega(x)
\\
&=&
\sum_ie^{-2\Lambda^\omega_it}\left\langle \textbf{1}, \psi^\omega_i\right\rangle^2
\\
&\leq&
e^{-2\Lambda^\omega_1t}\sum_x\textbf{1}^2(x)\pi_\omega(x)
\\
&\leq&
2^{d+2} d N^d e^{-2\Lambda^\omega_1t}.
\end{eqnarray*}

Then, for large enough $t$ and by \eqref{V2B}, we have

\begin{eqnarray}
\label{2en}
e^{\lambda t^\epsilon}E^\omega_0\left[e^{-\lambda\hat{A}(t)};t<\tau_N\right]\nonumber
&\leq &
(2^{d+2} d)^{1/2} e^{\lambda t^\epsilon}e^{-t\Lambda^\omega_1}N^{d/2}\nonumber
\\
&\leq &
(2^{d+2} d)^{1/2} N^{d/2} \exp\left\{\lambda t^\epsilon-tm(N)\right\}\nonumber
\\
&\leq&
(2^{d+2} d)^{1/2} N^{d/2} e^{-\frac{t}{2} m(N)},
\end{eqnarray}
since $\epsilon<1$.
\smallskip

By our choice of $t\approx N^2(\log N)^{-b}$ ($b>1$), we deduce
\begin{eqnarray}
\label{dd}
e^{\lambda t^\epsilon}E^\omega_0\left[e^{-\lambda\hat{A}(t)};t<\tau_N\right]
&\leq&
(2^{d+2} d)^{1/2} N^{d/2}\exp\left\{-[16d (\log N)^{b+5}]^{-1}N^2 N^{-(\frac{d}{\gamma}+\mu)}\right\}\nonumber
\\
&\ll&
t^{-\frac{d}{2}} \qquad \text{if}\quad \gamma>\frac{d}{2-\mu},
\end{eqnarray}
which yields \eqref{t-e}.

In conclusion, as $\mu$ is arbitrary and according to \twoeqref{c-v}{t-e}, we obtain  
$$
\limsup_{t\rightarrow+\infty}\frac{\log P^\omega_0(\hat{A}(t)\leq t^\epsilon)}{\log t}\leq-\frac{d}{2} \qquad \text{for}\quad \gamma>\frac{d}{2},
$$
and finally, by \eqref{pb}
$$
\lim_{\epsilon\rightarrow1}\limsup_{t\rightarrow+\infty}\frac{\log P^\omega_0(X(t)=0)}{\log t}\leq -\frac{d}{2}\qquad \text{for}\quad \gamma>\frac{d}{2},
$$
which gives \eqref{tc2}.

We conclude that for any sufficiently small $\hat{\xi}$, then $\hat{\Q}_0-$a.s. \eqref{log-lim} is true and since $\Q\left(\cup_{\xi>0}\{0\in \CC^\xi(\omega)\}\right)=1$, it remains true $\Q$-a.s.
\end{proofsect}

\subsection{\textbf{Proof of the discrete-time case}}
\label{subsec3}
\begin{proof}
In the same way, the lower bound holds by the Invariance Principle (cf. \cite{BP}) and the Spatial Ergodic Theorem (see Remark \ref{rem}). \\
For the upper bound, let $(N_t)_{t\geq0}$ be a Poisson process of rate $1$. Set $n=\left\lfloor t\right\rfloor$. $P^{2n}_\omega(0,0)$ being a non increasing function of $n$ (cf. Lemma \ref{D}), then
\begin{eqnarray*}
P^\omega_0(X_t=0)
&=&
e^{-t}\sum_{k\geq0}\frac{t^k}{k!}P^k_\omega(0,0)
\\
&\geq&
e^{-t}\sum^{2n}_{k=0}\frac{t^k}{k!}P^k_\omega(0,0)
\\
&\geq&
P^{2n}_\omega(0,0)\left[e^{-t}\sum^{2n}_{k=0}\frac{t^k}{k!}\right]
\\
&=&
P^{2n}_\omega(0,0)\text{Prob}(N_t\leq 2n).
\end{eqnarray*}
By virtue of the LLN, we have
$$
\text{Prob}(N_t\leq 2n)\xrightarrow[t \to +\infty]{}1.
$$ 
From here the claim follows.
\end{proof}

\section*{Acknowledgments}
\noindent 
I would like to thank my Ph.D. advisor, Pierre Mathieu for suggestions and discussions on this problem. I thank my father Youcef Bey. 

\bigskip

\end{document}